%-----------------------------------------------------------------------
% Beginning of article.tex
%-----------------------------------------------------------------------
%
% AMS-LaTeX 1.2 sample file for book proceedings, based on amsproc.cls.
%
% Replace amsproc by the documentclass for the target series, e.g. pspum-l.
%
\documentclass{amsproc}
\usepackage{graphicx}
\usepackage{amssymb}
\usepackage{epstopdf}
\DeclareGraphicsRule{.tif}{png}{.png}{`convert #1 `dirname #1`/`basename #1 .tif`.png}
\usepackage{amsmath,amsthm,amscd,amssymb}
\usepackage{latexsym}
\usepackage[colorlinks,citecolor=red,pagebackref,hypertexnames=false]{hyperref}
\usepackage{geometry}                % See geometry.pdf to learn the layout options. There are lots.
\geometry{letterpaper}

\numberwithin{equation}{section}

\theoremstyle{plain}
\newtheorem{theorem}{Theorem}[section]
\newtheorem{lemma}[theorem]{Lemma}

\theoremstyle{definition}

\newtheorem{case[theorem]}{Case}

\theoremstyle{remark}

\numberwithin{equation}{section}

%    Absolute value notation

%    Blank box placeholder for figures (to avoid requiring any
%    particular graphics capabilities for printing this document).

\begin{document}

\title{\parbox{14cm}{\centering{Orthogonal systems in vector spaces over finite fields}}}

%    Information for first author

\author{Alex Iosevich and Steve Senger}

\address{Department of Mathematics, University of Missouri, Columbia, MO 65211-4100}

\email{iosevich@math.missouri.edu}

\email{senger@math.missouri.edu}

\thanks{A. Iosevich was supported by the NSF Grant DMS04-56306 and S. Senger was supported by the NSF Grant DMS07-04216}

\begin{abstract} We prove that if a subset of the $d$-dimensional vector space over a finite field is large enough, then it contains many $k$-tuples of mutually orthogonal vectors. \end{abstract}

\maketitle

\tableofcontents

\section{Introduction}

A classical set of problems in combinatorial geometry deals with the question of whether a sufficiently large subset of ${\Bbb R}^d$, ${\Bbb Z}^d$, or ${\Bbb F}_q^d$ contains a given geometric configuration. 
For example, a classical result due to Furstenberg, Katznelson and Weiss (\cite{FKW90}; see also \cite{B86}) says that if $E \subset {\mathbb R}^2$ has positive upper Lebesgue density, then for any $\delta>0$, the $\delta$-neighborhood of $E$ contains a congruent copy of a sufficiently large dilate of every three point configuration.

When the size of the point set is smaller than the dimension of ambient Euclidean space, taking a $\delta$-neighborhood is not necessary, as shown by Bourgain in \cite{B86}. He proves that if $E \subset {\Bbb R}^d$ has positive upper density and $\Delta$ is a $k$-simplex with $k<d$, then $E$ contains a rotated and translated image of every large dilate of $\Delta$. The case $k=d$ and $k=d+1$ remain open, however.  See also, for example, \cite{Berg96}, \cite{F81}, \cite{K07}, \cite{TV06} and \cite{Z99} on related problems and their connections with discrete analogs.

In the geometry of the integer lattice ${\Bbb Z}^d$, related problems have been recently investigated by Akos Magyar in \cite{M06} and \cite{M07}. In particular, he proves in \cite{M07} that if $d>2k+4$ and $E \subset {\Bbb Z}^d$ has positive upper density, then all large (depending on density of $E$) dilates of a $k$-simplex in ${\Bbb Z}^d$ can be embedded in $E$. Once again, serious difficulties arise when the size of the simplex is sufficiently large with respect to the ambient dimension.

In finite field geometries a step in this direction was taken by the second and third listed authors in \cite{HI07}. They prove that if $E \subset {\Bbb F}_q^d$, the $d$-dimensional vector space over the finite field with $q$ elements with $|E| \ge Cq^{d \frac{k-1}{k}+\frac{k-1}{2}}$ and $\Delta$ is a $k$-dimensional simplex, then there exists $\tau \in {\Bbb F}_q^d$ and $O \in SO_d({\Bbb F}_q)$ such that $\tau+O(\Delta) \subset E$. The result is only non-trivial in the range $d \ge \left(^k_2\right)$ as larger simplexes are out of range of the methods used. See also \cite{HIKSU08} for a thorough graph theoretic analysis of a more general problem.

In this paper we ask whether a sufficiently large subset of ${\Bbb F}_q^d$, the $d$-dimensional vector space over the finite field with $q$ elements, contains a $k$-tuple of mutually orthogonal vectors. Similar questions, at least in the context of pairs of orthogonal vectors are studied in \cite{AK97}. This problem does not have a direct analog in Euclidean or integer geometries because placing the set strictly inside $\{x \in {\Bbb R}^d: x_j>0\}$ immediately guarantees that no orthogonal vectors are present. However, the arithmetic of finite fields allows for a richer orthogonal structure. Our main result is the following. 
\begin{theorem} \label{main}
Let $E \subset \mathbb{F}_q^d$, such that 
$$ \vert E \vert \ge Cq^{d\frac{k-1}{k}+\frac{k-1}{2}+\frac{1}{k}}$$ with a sufficiently large constant $C>0$, where 
$$0<(^k_2) < d.$$

Let $\lambda_k$ be the number of $k$-tuples of $k$ mutually orthogonal vectors in $E$. Then 
$$\lambda_k=(1+o(1)) \vert E \vert^k q^{-\left(^k_2\right)}.$$
\end{theorem}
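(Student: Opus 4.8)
The plan is the Fourier-analytic method standard for this type of counting problem. Fix a nontrivial additive character $\chi$ of $\mathbb{F}_q$, and for $f\colon\mathbb{F}_q^d\to\mathbb{C}$ write $\widehat f(m)=\sum_x f(x)\chi(-x\cdot m)$, so that Plancherel reads $\sum_m|\widehat f(m)|^2=q^d\sum_x|f(x)|^2$ and $\widehat E(0)=|E|$ (writing $E$ also for its indicator function). Expanding each of the $\binom k2$ orthogonality constraints via $q^{-1}\sum_{s\in\mathbb F_q}\chi(su)=\mathbf 1(u=0)$,
$$\lambda_k=q^{-\binom k2}\sum_{(t_{ij})\in\mathbb F_q^{\binom k2}}\ \sum_{x^1,\dots,x^k\in E}\chi\Big(\sum_{1\le i<j\le k}t_{ij}\,x^i\cdot x^j\Big).$$
The frequency $(t_{ij})\equiv 0$ contributes exactly $q^{-\binom k2}|E|^k$, the asserted main term, so everything reduces to showing that the remaining sum $\mathcal R$ satisfies $|\mathcal R|=o\big(q^{-\binom k2}|E|^k\big)$.

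I would estimate $\mathcal R$ by organizing it according to the support $S=\{(i,j):t_{ij}\neq 0\}$, that is, according to the graph $H$ on $\{1,\dots,k\}$ with edge set $S$, and bounding the contribution of each nonempty $H$ separately. For a fixed frequency with support $H$, evaluate the inner sum over $x^1,\dots,x^k$ by peeling the vectors off one at a time in a convenient elimination order: a vertex isolated in $H$ contributes a clean factor $|E|$, summing over a degree-one vertex $x^i$ adjacent to $x^j$ produces the factor $\widehat E(-t_{ij}x^j)$, and in general summing over $x^i$ produces $\widehat E$ evaluated at the linear combination $-\sum_j t_{ij}x^j$ over the $H$-neighbours $j$ of $i$. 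What remains is then handled by Cauchy--Schwarz, pairing two of the surviving $\widehat E$-factors, followed by Plancherel $\sum_m|\widehat E(m)|^2=q^d|E|$ and the trivial bound $|\widehat E(m)|\le|E|$, using at each step that a nonzero linear form has fibres of size at most $|E|$ raised to the number of free variables. Carried out for a single-edge support $\{t_{12}\neq0\}$, this yields a contribution which in absolute value is $O\big(q^{-\binom k2}\,q^{1+d/2}|E|^{k-1}\big)$, since $\sum_{x^1,x^2\in E}\chi(t_{12}x^1\cdot x^2)=\sum_{x^1\in E}\widehat E(-t_{12}x^1)$ has modulus at most $q^{d/2}|E|$ by Cauchy--Schwarz and Plancherel; requiring this to be $o\big(q^{-\binom k2}|E|^k\big)$ means $|E|\gg q^{1+d/2}$, which for $k=2$ is exactly the hypothesis and for $k\ge 3$ is implied by it.

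The main obstacle is controlling the denser supports: bounding all of the (up to $q^{\binom k2}$) frequencies by the single-edge bound is far too wasteful once $k\ge 3$, so one must extract the extra cancellation carried by a frequency with a denser support. Quantitatively, if $M(t)$ is the $k\times k$ symmetric matrix with zero diagonal and off-diagonal entries $t_{ij}$, then $\sum_{i<j}t_{ij}x^i\cdot x^j$ is a quadratic form on $\mathbb F_q^{dk}$ of rank $d\cdot\operatorname{rank}M(t)$, so its Gauss sum produces a gain of order $q^{-d\cdot\operatorname{rank}M(t)/2}$ together with a dual quadratic phase on the Fourier side; the delicate point is to realize this gain without losing it back through the $\ell^1$-versus-$\ell^2$ slack when the $\widehat E$-factors pass through Cauchy--Schwarz, and to weigh it against the number of frequencies of each rank. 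The hypothesis $0<\binom k2<d$ is precisely what makes the budget work: the configuration has codimension $\binom k2<d$, so each orthogonality condition genuinely saves a factor of $q$, leaving room for all the intermediate applications of Cauchy--Schwarz and Plancherel. An essentially equivalent, and perhaps more transparent, organization is induction on $k$ through the identity $\lambda_k(E)=\sum_{x\in E}\lambda_{k-1}(E\cap x^\perp)$: one applies the inductive estimate to the hyperplane slices $E\cap v^\perp\subset v^\perp\cong\mathbb F_q^{d-1}$ --- the inequality $d>\binom k2$ is stable under $k\mapsto k-1$, $d\mapsto d-1$ --- after using Plancherel to show that $|E\cap v^\perp|=(1+o(1))|E|/q$ for all but a negligible fraction of $v$; the subtle step there is to estimate the contribution of the atypical slices, via a dyadic decomposition by slice size together with the second-moment bound $\sum_v\big(|E\cap v^\perp|-|E|/q\big)^2\lesssim q^d|E|$, rather than discarding them with the trivial bound $|E\cap v^\perp|^{k-1}\le|E|^{k-1}$, which would be too lossy.
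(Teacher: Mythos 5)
Your extraction of the main term and your single-edge estimate are correct, and you have correctly located the difficulty: for $k\ge 3$ the entire content of the theorem is the bound on the nonzero frequencies, and you do not close that bound in either of the two organizations you propose. In the first (all $\binom{k}{2}$ constraints expanded at once, frequencies grouped by the support graph and the rank of $M(t)$), the step you yourself flag as delicate --- realizing the Gauss-sum gain $q^{-d\,\mathrm{rank}\,M(t)/2}$ without losing it back through Cauchy--Schwarz, and weighing it against the roughly $q^{|S|}$ frequencies of each support --- is exactly where a proof would have to live, and nothing in your sketch supplies it. In the second (induction via $\lambda_k(E)=\sum_{v\in E}\lambda_{k-1}(E\cap v^\perp)$), applying the theorem inductively to the slices is not immediate: for the roughly $q^{d-1}$ isotropic $v$ the dot product restricted to $v^\perp$ is degenerate (indeed $v\in v^\perp$), and for non-isotropic $v$ the induced form on $v^\perp\cong\mathbb{F}_q^{d-1}$ need not be equivalent to the standard one, so the inductive statement would have to be formulated and proved for general nondegenerate forms; on top of that, the atypical-slice bookkeeping you defer to a dyadic decomposition must be checked against the exponent $d\frac{k-1}{k}+\frac{k-1}{2}+\frac{1}{k}$, which leaves very little slack.

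The paper in fact takes your second route but closes it without ever inducting on the slices. Writing $\lambda_k=\sum \mathcal{D}_{k-1}(x^1,\dots,x^{k-1})E(x^k)$ over tuples with $x^k$ orthogonal to $x^1,\dots,x^{k-1}$, it expands only the $k-1$ constraints involving the new vector $x^k$ (so the frequency variable is $(s_1,\dots,s_{k-1})\in\mathbb{F}_q^{k-1}$, not $\mathbb{F}_q^{\binom{k}{2}}$), keeps the indicator $\mathcal{D}_{k-1}$ of the already-orthogonal $(k-1)$-tuples as a weight, and applies Cauchy--Schwarz in $(x^1,\dots,x^{k-1})$ so that the error is at most $\lambda_{k-1}^{1/2}\,\Vert r_{k-1}\Vert_{L^2}$, where $r_{k-1}$ is the hyperplane discrepancy $|E\cap H_{x^1,\dots,x^{k-1}}|-|E|q^{-(k-1)}$. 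The factor $\Vert r_{k-1}\Vert_{L^2}^2\lesssim |E|\,q^{(d-1)(k-1)}$ is then computed directly by orthogonality, with no reference to the geometry of individual slices, while the zero frequency yields the recursion $\lambda_k=(1+o(1))|E|q^{-(k-1)}\lambda_{k-1}$ that produces the main term. This $L^2$ discrepancy bound is the missing piece in your plan: it replaces both the rank analysis of $M(t)$ and the slice-wise induction, and it is what converts your outline into a proof.
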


\vskip.125in

\subsection{Graph theoretic interpretation} Define a hyper-graph $G_k(q,d)$ by taking its vertices to be the elements of ${\Bbb F}_q^d$ and connect $k$ vertices by a hyper-edge if they are mutually orthogonal. Theorem \ref{main} above implies that any subgraph of $G_k(q,d)$ with more than $Cq^{d\frac{k-1}{k}+\frac{k-1}{2}+\frac{1}{k}}$ vertices contains $(1+o(1)){|E|}^k q^{-{k \choose 2}}$ hyper-edges, which is the statistically expected number.

Alternatively, we can think of Theorem \ref{main} as saying that any sub-graph of $G_2(q,d)$ of size greater than $Cq^{d\frac{k-1}{k}+\frac{k-1}{2}+\frac{1}{k}}$ contains $(1+o(1)){|E|}^k q^{-{k \choose 2}}$ complete sub-graph on $k$ vertices, once again a statistically expected number.

See \cite{HIKSU08}, and the references contained therein, for a systematic description of the properties of related graphs.

\vskip.125in

\subsection{Hyperplane discrepancy problem}

One of the key features of the proof of this result is the analysis of the following discrepancy problem. Let $$H_{x^1, x^2, \dots, x^k}=\{y \in {\Bbb F}_q^d: y \cdot x^j=0 \ j=1,2 \dots, k\}.$$

Define the discrepancy function $r_k$ by the equation 
$$ |E \cap H_{x^1, \dots, x^k}|=|E|q^{-k}+r_k(x^1, \dots, x^k),$$ where the first term should be viewed as the "expected" size of the intersection. In Lemma \ref{discrepancy} below we show that on average, 
$$ |r_k(x^1, \dots, x^k)| \lesssim \sqrt{|E|q^{-k}},$$ where here, and throughout the paper, $X \lesssim Y$ means that there exists $C>0$, independent of $q$, such that $X \leq CY$.

\subsection{Acknowledgements} The authors wish to thank Boris Bukh, Seva Lev and Michael Krivelevich for interesting comments and conversations pertaining to this paper.

\vskip.125in

\section{Proof of Theorem \ref{main}}

\vskip.125in

Observe that 
$$ r_{k-1}\left(x^1, ..., x^{k-1}\right) = q^{-(k-1)} \sum_{\substack{s_i \in \mathbb{F}_q^* \\ i=1,2,..., k-1}} \sum_{\substack{x^k \in \mathbb{F}_q^d}}E(x^k)\prod_{i=1}^{k-1}{\chi(-s_i x^i \cdot x^k)}. $$

\begin{lemma}\label{discrepancy}
$\Vert r_{k-1} \Vert_{L^2} \lesssim \vert E \vert ^\frac{1}{2} q^\frac{(d-1)(k-1)}{2}$.
\end{lemma}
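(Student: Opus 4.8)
The plan is to expand the square, carry out the sums over $x^1,\dots,x^{k-1}$ by orthogonality of additive characters, and then count the configurations that survive. Writing $\Vert r_{k-1}\Vert_{L^2}^2=\sum_{x^1,\dots,x^{k-1}\in\mathbb{F}_q^d}r_{k-1}(x^1,\dots,x^{k-1})^2$, substituting the displayed formula for $r_{k-1}$, and using $u,v$ for the two copies of $x^k$ and $s_i,t_i$ for the two families of scalars (and $\overline{\chi(a)}=\chi(-a)$), one obtains
\[
\Vert r_{k-1}\Vert_{L^2}^2=q^{-2(k-1)}\sum_{s_i,t_i\in\mathbb{F}_q^*}\sum_{u,v\in\mathbb{F}_q^d}E(u)E(v)\sum_{x^1,\dots,x^{k-1}\in\mathbb{F}_q^d}\prod_{i=1}^{k-1}\chi\bigl(x^i\cdot(t_iv-s_iu)\bigr).
\]
For each $i$ the sum over $x^i\in\mathbb{F}_q^d$ factors off, and since $\sum_{x\in\mathbb{F}_q^d}\chi(x\cdot w)$ equals $q^d$ for $w=0$ and $0$ otherwise, the whole sum over $x^1,\dots,x^{k-1}$ equals $q^{d(k-1)}$ precisely when $s_iu=t_iv$ for every $i$ and vanishes otherwise. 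Hence
\[
\Vert r_{k-1}\Vert_{L^2}^2=q^{(d-2)(k-1)}\sum_{s_i,t_i\in\mathbb{F}_q^*}\sum_{u,v\in\mathbb{F}_q^d}E(u)E(v)\prod_{i=1}^{k-1}\mathbf{1}[\,s_iu=t_iv\,].
\]

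Next I would extract the arithmetic content of the constraints $s_iu=t_iv$ ($i=1,\dots,k-1$), recalling that all $s_i,t_i$ are nonzero. Either $u=v=0$, or $u$ and $v$ are both nonzero and proportional, say $v=cu$ with $c\in\mathbb{F}_q^*$ determined by the pair $(u,v)$; in the latter case the constraints become $s_i=ct_i$ for each $i$, leaving exactly $q-1$ admissible pairs $(s_i,t_i)$ per index, i.e.\ $(q-1)^{k-1}$ overall. Separating the diagonal $u=v$ (that is $c=1$) from the off-diagonal proportional pairs: the diagonal contributes $(q-1)^{k-1}|E|$, which together with the prefactor $q^{(d-2)(k-1)}$ is a term of the desired size $\asymp|E|\,q^{(d-1)(k-1)}$; the degenerate term $u=v=0$ contributes only $O(q^{2(k-1)})$, which is lower order since $|E|\gtrsim q^{k-1}$ under the hypothesis of Theorem~\ref{main}.

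The remaining, and most delicate, contribution comes from pairs $(u,v)$ of distinct nonzero points of $E$ lying on a common line through the origin: summing over these and over $c$, it equals $(q-1)^{k-1}\sum_{\ell\ni0}\bigl(|E\cap\ell|^2-|E\cap\ell|\bigr)$, where $\ell$ ranges over lines through the origin. The crux of the proof is therefore to bound $\sum_{\ell\ni0}|E\cap\ell|^2\lesssim|E|$ --- equivalently, to show that $E$ does not concentrate on a small collection of lines through the origin --- and this is precisely the point at which the size assumption $|E|\ge Cq^{d\frac{k-1}{k}+\frac{k-1}{2}+\frac{1}{k}}$ is used. Granting this, the three contributions combine to $\Vert r_{k-1}\Vert_{L^2}^2\lesssim|E|\,q^{(d-1)(k-1)}$, which is the assertion.
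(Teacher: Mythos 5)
Your expansion, the orthogonality step, and the resulting identity
\[
\Vert r_{k-1}\Vert_{L^2}^2=q^{(d-2)(k-1)}\sum_{s_i,t_i\in\mathbb{F}_q^*}\ \sum_{u,v\in\mathbb{F}_q^d}E(u)E(v)\prod_{i=1}^{k-1}\mathbf{1}[\,s_iu=t_iv\,]
\]
are exactly the paper's computation, and your split into the diagonal pairs, the degenerate pair $u=v=0$, and the off-diagonal collinear pairs is precisely the paper's $A$/$B$ decomposition. Up to that point the argument is sound; indeed your multiplicity count of $(q-1)^{k-1}$ admissible scalar tuples for each collinear pair (coming from the $k-1$ constraints $s_i=ct_i$ with $t_i$ free) is the careful one, whereas the paper asserts only $q^{k-2}$ free parameters at the corresponding step.

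The gap is the final estimate. The inequality $\sum_{\ell\ni 0}|E\cap\ell|^2\lesssim|E|$ that you identify as the crux is not proved, and the mechanism you propose for it --- that the largeness hypothesis on $E$ forces non-concentration on lines through the origin --- is backwards. There are only about $q^{d-1}$ such lines and $\sum_{\ell\ni 0}|E\cap\ell|\approx|E|$, so Cauchy--Schwarz gives $\sum_{\ell\ni 0}|E\cap\ell|^2\ge|E|^2/q^{d-1}$, which exceeds $|E|$ exactly when $|E|\gg q^{d-1}$; the theorem's hypothesis permits (indeed requires) such sets, e.g.\ $d=3$, $k=2$ asks only for $|E|\ge Cq^{5/2}$ while $q^{d-1}=q^2$. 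A large set \emph{must} concentrate on lines, so no size assumption can close this step. For comparison, the paper uses only the trivial bound $|E\cap\ell|\le q$, i.e.\ $\sum_{u\in E}|E\cap l_u|\le|E|q$, which combined with your (correct) multiplicity $(q-1)^{k-1}$ yields $\Vert r_{k-1}\Vert_{L^2}^2\lesssim|E|q^{(d-1)(k-1)+1}$, a factor of $q$ short of the stated lemma; the paper recovers that factor only through the $q^{k-2}$ count you should recheck against your own. A useful sanity check that the obstruction you hit is real: for $k=2$ the single point $x^1=0$ gives $r_1(0)=(1-q^{-1})|E|$, so $\Vert r_1\Vert_{L^2}^2\ge(1-q^{-1})^2|E|^2$, which is already incompatible with the claimed bound $|E|q^{d-1}$ once $|E|>q^{d-1}$. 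So the step cannot be repaired by an appeal to the size of $E$; one must either restrict the domain of $r_{k-1}$ (as effectively happens in the application, where the $x^j$ are weighted by $\mathcal{D}_{k-1}$ and $E$) or settle for the weaker exponent.
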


Assuming Lemma \ref{discrepancy} for now, we prove the main result, Theorem \ref{main}.

\begin{proof}
Define $\mathcal{D}_k := \left\lbrace  (x^1, ..., x^k) \in E^k : x^i \cdot x^j = 0, \forall 1 \leq i < j \leq k \right\rbrace $, where $E^k$ means $\underbrace{E \times E \times ... \times E}_{k \text{ times}}$. Also, let $\mathcal{D}_k(x^1, ..., x^k)$ and $E(x)$ be the indicator functions for the set $\mathcal{D}_k$ and $E$, respectively. Clearly $\vert \mathcal{D}_k \vert = \lambda_k$.

\begin{align*}
\lambda_k &= \sum_{\substack{x^j \in \mathbb{F}_q^d : x^j \cdot x^k = 0 \\ j=1,2,...,k}}{\mathcal{D}_{k-1}(x^1, ..., x^{k-1})E(x^k)} \\
&= q^{-(k-1)} \sum_{\substack{x^j \in \mathbb{F}_q^d \\ j=1,2,..., k}}{\mathcal{D}_{k-1}(x^1, ..., x^{k-1})E(x^k)\sum_{\substack{s_i \in \mathbb{F}_q \\ i=1,2,..., k-1}}\prod_{i=1}^{k-1}{\chi(-s_i x^i \cdot x^k)}}\\
&= q^{-(k-1)} \sum_{\substack{s_i \in \mathbb{F}_q \\ i=1,2,..., k-1}}\sum_{\substack{x^j \in \mathbb{F}_q^d \\ j=1,2,..., k}}{\mathcal{D}_{k-1}(x^1, ..., x^{k-1})E(x^k)\prod_{i=1}^{k-1}{\chi(-s_i x^i \cdot x^k)}}\\
&= I + II + III,\\
\end{align*}

where we seperate the sum into three parts depending on the $s_i$'s. $I$ is the sum when all of the $s_i$'s are zero. $II$ is the sum when none of the $s_i$'s are equal to zero. $III$ is the sum when some of the $s_i$'s are equal to zero, and some are not. We treat these three cases seperately. We will show that $I$ dominates the other terms when $\vert E \vert$ satisfies the size condition, and is therefore the number of sets of $k$ mutually orthogonal vectors present in $E$, modulo a constant.

\begin{align*}
I &= \sum_{\substack{x^j \in \mathbb{F}_q^d \\ j=1,2,..., k}}{\mathcal{D}_{k-1}(x^1, ..., x^{k-1}) E(x^k) q^{-(k-1)}}\\
&= \sum_{\substack{x^j \in \mathbb{F}_q^d \\ j=1,2,..., k-1}}{\mathcal{D}_{k-1}(x^1, ..., x^{k-1}) \vert E \vert q^{-(k-1)}}\\
&= \vert E \vert q^{-(k-1)}\sum_{\substack{x^j \in \mathbb{F}_q^d \\ j=1,2,..., k-1}}{\mathcal{D}_{k-1}(x^1, ..., x^{k-1})}\\
&= \vert E \vert q^{-(k-1)} \lambda_{k-1}
\end{align*}

If $I$ indeed dominates the other two terms, we'll have
$$
\frac{\lambda_k}{\lambda_{k-1}} = \vert E \vert q^{-(k-1)}.
$$

To get an expression for $\lambda_k$, we run the computation for $k=2$ first. In this case, instead of $\mathcal{D}_1\left(x^1\right)$, we'll just have $E\left(x^1\right)$. This is because we are checking that any two orthogonal vectors in $\mathbb{F}_q^d$ are also in our set $E$. Running through the above calculation in this manner yields $\lambda_2 = \vert E \vert^2 q^{-1}.$

$$
\lambda_k = \frac{\vert E \vert^k}{q^{(^k_2)}}.
$$

Now we need to compute $II$, the biggest error term. Now we recall the definition of the discrepancy function.

$$
r_{k-1}\left(x^1, ..., x^{k-1}\right) = q^{-(k-1)}\sum_{\substack{s_i \in \mathbb{F}_q^* \\ i=1,2,..., k-1}} \sum_{\substack{x^k \in \mathbb{F}_q^d}}E(x^k)\prod_{i=1}^{k-1}{\chi(-s_i x^i \cdot x^k)}
$$

First, we dominate the sum of the $x^j$'s in $E$ by the sum of $x^j$'s in all of $\mathbb{F}_q^d$. Then we apply Cauchy-Schwarz to the sum over the $x^j$'s.

\begin{align*}
II &= q^{-(k-1)} \sum_{\substack{s_i \in \mathbb{F}_q^* \\ i=1,2,..., k-1}}\sum_{\substack{x^j \in \mathbb{F}_q^d \\ j=1,2,..., k}}{\mathcal{D}_{k-1}(x^1, ..., x^{k-1})E(x^k)\prod_{i=1}^{k-1}{\chi(-s_i x^i \cdot x^k)}}\\
&\leq \sum_{\substack{x^j \in E \\ j=1,2,..., k-1}} q^{-(k-1)} \sum_{\substack{s_i \in \mathbb{F}_q^* \\ i=1,2,..., k-1}}\sum_{\substack{x^k \in \mathbb{F}_q^d}}{\mathcal{D}_{k-1}(x^1, ..., x^{k-1})E(x^k)\prod_{i=1}^{k-1}{\chi(-s_i x^i \cdot x^k)}}\\
&\leq \lambda_{k-1}^\frac{1}{2} (\sum_{x^1, ..., x^{k-1}} r_{k-1}^2)^\frac{1}{2} \approx \vert E \vert^\frac{k-1}{2} q^\frac{-\left(^{k-1}_{\;\; 2}\right)}{2} q^{k-1} \approx \vert E \vert^\frac{k-1}{2}q^\frac{-\left(^k_2\right)}{2} \Vert r_{k-1}\Vert_{L^2}.
\end{align*}

So we use Lemma \ref{discrepancy} to get a handle on $\Vert r_{k-1} \Vert_{L^2}$. Now we are guaranteed that

\begin{align*}
II &\lesssim \vert E \vert^\frac{k-1}{2}q^\frac{-\left(^k_2\right)}{2} \vert E \vert^\frac{1}{2} q^\frac{(d-1)(k-1)}{2}. \\
&= \vert E \vert^\frac{k}{2} q^\frac{(d-1)(k-1)-\left(^k_2\right)}{2}
\end{align*}

To deal with $III$, break it up into sums that have the same number of non-zero $s_j$'s.

\begin{align*}
III &= \sum_{\text{one } s_j = 0} + \sum_{\text{two } s_j \text{'s} = 0} + ...\\
&= d\sum_{s_1 = 0} + d(d-1)\sum_{s_1 = s_2 = 0} + ...
\end{align*}

Now each of these sums will look like $II$, but with $(k-2)$ instead of $(k-1)$ for the first sum, and $(k-3)$ instead of $(k-1)$ in the second sum, and so on. This allows us to bound each sum in $III$ as follows:

$$
III \lesssim d\vert E \vert^\frac{k-2}{2}q^\frac{\left(^{k-1}_{\;\;\,2}\right)}{2}\Vert r_{k-2} \Vert_{L^2} + d(d-1)\vert E \vert^\frac{k-3}{2}q^\frac{\left(^{k-2}_{\;\;\,2}\right)}{2}\Vert r_{k-3} \Vert_{L^2} + ...
$$

So $III$ is dominated by $II$ as long as $q > d$, which is guaranteed, as $q$ grows arbirtarily large.

Now we only need to find appropriate conditions on $E$ to ensure that $I > II$.

\begin{align*}
I &> II \\
\vert E \vert^k q^{-\left(^k_2\right)} &> \vert E \vert^\frac{k}{2} q^\frac{(d-1)(k-1)-\left(^k_2\right)}{2} \\
\vert E \vert^\frac{k}{2} &> q^\frac{k(k-1)-2(d-1)(k-1)}{4} \\
\vert E \vert &> q^{\left(\frac{k-1}{k}\right)d +\frac{k-1}{2} + \frac{1}{k}}.
\end{align*}
\end{proof}

Now to prove the Lemma \ref{discrepancy}.

\begin{proof}
\indent Recall the definition of $r_{k-1}\left(x^1, ..., x^{k-1}\right)$ and use orthogonality in $x^1, ..., x^{k-1}$.
\begin{align*}
\Vert r_{k-1} \Vert_{L^2}^2 &= q^{-2(k-1)} \sum_{x^1, ..., x^{k-1}} \sum_{\substack{s_1, s_1', ..., \\s_{k-1}, s_{k-1}'}} \sum_{x^k, y^k \in E}  \prod_{j=1}^{k-1} \chi((s_j x^k - s_j' y^k) \cdot x^j) \\
&= q^{d(k-1)}q^{-2(k-1)}\left( \sum_{s_j = s_j'} \sum_{\substack{x^k, y^k : \\ s_j x^k = s_j' y^k}} E(x^k) E(y^k) + \sum_{s_j \neq s_j'} \sum_{\substack{x^k, y^k : \\ s_j x^k = s_j' y^k}} E(x^k) E(y^k) \right) \\
&= q^{(d-2)(k-1)}\left( A + B \right)
\end{align*}

Let us approach $A$ first. Since $s_1 = s_1'$, and $s_j x^k = s_j' y^k$ for all $j$, we know that it holds for $j=1$, and therefore $x^k = y^k$. This tells us that $s_j = s_j'$ for all $j$. So

$$
A = \sum_{s_1, ...,s_{k-1}} \sum_{x^k}E(x^k)E(x^k) = q^{(k-1)}\sum_{x^k}E(x^k)E(x^k) = \vert E \vert q^{(k-1)}
$$

Now we tackle the quantity $B$. Here we introduce new variables, $\alpha = \frac{s_1}{s_1'}$ and $\alpha_j' = s_j'$. Also notice that the condition $s_j x^k = s_j' y^k$ implies $\alpha = \frac{s_j}{s_j'}$ for all $j$. So we did have to sum over $2(k-1)$ different variables, but now we know that these are completely determined by only $k$ of the originals. So we will have $(k-2)$ free variables. In light of this, with a simple change of variables we get

\begin{align*}
B &= q^{(k-2)} \sum_{y^k = \alpha x^k} E(x^k) E(\alpha x^k) \\
&\leq q^{(k-2)} \sum_{x^k \in \mathbb{F}_q^d} \vert E \cap l_{x^k} \vert \\
&\leq \vert E \vert q^{(k-1)}
\end{align*}

Where, $l_{x^k} := \left\lbrace tx^k \in \mathbb{F}_q^d : t \in \mathbb{F}_q\right\rbrace$, which can only intersect $E$ at most $q$ times. With the estimates for $A$ and $B$ in tow,

\begin{align*}
\Vert r_{k-1} \Vert_{L^2}^2 &= q^{(d-2)(k-1)}\left( A + B \right) \\
&\leq q^{(d-2)(k-1)} \left( 2 \vert E \vert q^{(k-1)}\right) \\
&= 2 \vert E \vert q^{(d-1)(k-1)}.
\end{align*}

\end{proof}

\section{Sharpness examples}

\indent The following lemmata are included to show how close Theorem \ref{main} is to being sharp. There are several possible notions of sharpness for this result. It is clearly interesting to consider how big a set can be without containing \textit{any} orthogonal $k$-tuples. The first lemma is merely an instuitive construction used in the next lemma, both of which concern large sets with no orthogonal $k$-tuples. The last lemma is included to show how close our size condition is if we relax the allowable number of orthogonal $k$-tuples.\\

\begin{lemma}\label{d=2}
There exists a set $E \subset \mathbb{F}_q^2$ such that $\vert E \vert \approx q^2$, but no pair of its vectors are orthogonal. 
\end{lemma}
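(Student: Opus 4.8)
The plan is to construct an explicit set $E \subset \mathbb{F}_q^2$ of size $\gtrsim q^2$ containing no orthogonal pair, which amounts to finding a large set of directions that avoids its own orthogonal complement. Since orthogonality of nonzero vectors in $\mathbb{F}_q^2$ depends only on their directions (lines through the origin), the problem reduces to a statement about $\mathbb{P}^1(\mathbb{F}_q)$: the map $[a:b] \mapsto [-b:a]$ (or $[b:-a]$) sending a direction to its orthogonal direction is an involution on the $q+1$ points of $\mathbb{P}^1(\mathbb{F}_q)$, and I want a set $S$ of directions with $S \cap S^\perp = \varnothing$ together with the requirement that no vector in $E$ is orthogonal to another vector in $E$ — including itself if it is \emph{isotropic}, i.e. $x \cdot x = 0$.

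First I would handle the isotropic vectors. If $-1$ is a square in $\mathbb{F}_q$, say $-1 = i^2$, then the two isotropic directions $[1:i]$ and $[1:-i]$ exist, each is orthogonal to itself, and they are orthogonal complements of each other; we simply exclude both directions from $S$. If $-1$ is a nonsquare there are no isotropic vectors and this concern evaporates. In either case we are left with the involution $\sigma: [a:b]\mapsto [b:-a]$ acting on the (at most $q+1$) non-isotropic directions; $\sigma$ has no fixed points among them (a fixed point would force $a^2 = -b^2$, i.e.\ isotropy), so the non-isotropic directions are partitioned into roughly $q/2$ orbits of size $2$. Choosing one direction from each orbit gives a set $S$ of $\approx q/2$ directions with $S \cap \sigma(S) = \varnothing$, and then taking $E = \{ t v : v \text{ a unit vector in direction } s \in S,\ t \in \mathbb{F}_q^* \}$ (i.e.\ all nonzero vectors whose direction lies in $S$) produces $|E| = (q-1)|S| \approx q^2/2 \gtrsim q^2$ with no orthogonal pair, since $x \cdot y = 0$ for nonzero $x,y$ forces their directions to be $\sigma$-related, hence not both in $S$, and also forces neither to be isotropic so self-orthogonality cannot occur.

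The one point requiring a little care — the main obstacle, such as it is — is bookkeeping the two cases for whether $-1$ is a square (equivalently $q \equiv 1$ or $3 \bmod 4$ when $q$ is an odd prime, with the usual quadratic-character criterion for prime powers), and making sure the count $|S| \approx q/2$ survives removing the one or two isotropic directions; this only changes $|S|$ by a bounded additive amount, so $|E| \approx q^2/2$ still holds and in particular $|E| \approx q^2$ in the sense of the lemma. I would also note in passing that $q$ should be taken odd (for $q$ even, $x\cdot x = (x_1+x_2)^2$ and the characteristic-$2$ geometry degenerates differently), which is the implicit standing assumption throughout the paper. With these cases dispatched, the construction and the verification that no two vectors of $E$ are orthogonal are both immediate from the orbit structure of $\sigma$ on $\mathbb{P}^1(\mathbb{F}_q)$.
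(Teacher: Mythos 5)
Your proposal is correct and is essentially the paper's own construction: choose about $q/2$ lines through the origin, no two perpendicular (and none self-perpendicular), and take all nonzero vectors on them. You simply carry it out more carefully than the paper does, via the fixed-point-free involution on non-isotropic directions of $\mathbb{P}^1(\mathbb{F}_q)$ and the explicit exclusion of the (at most two) isotropic directions, which the paper's one-line sketch glosses over.
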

\begin{proof}
This is done by taking the union of about $\frac{q}{2}$ lines through the origin, such that no two lines are perpendicular, and removing the union of their $\frac{q}{2}$ orthogonal complements, which are lines perpendicular to lines in the first union. Then our set $E$ has about $\frac{q^2}{2}$ points, but no pair has a zero dot product.
\end{proof}

The next result is the main counterexample, which shows that it is possible to construct large subsets of $\Bbb F_q^d$ with no pairs of orthogonal vectors.

\begin{lemma}\label{k=2}
There exists a set $E \subset \mathbb{F}_q^d$ such that $\vert E \ge cq^{\frac{d}{2}+1}$, for some $c>0$, but no pair of its vectors are orthogonal. 
\end{lemma}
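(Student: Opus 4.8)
The plan is to build $E$ as a union of affine lines (or rather, cosets arrangements) engineered so that every vector in $E$ has a nonzero dot product with every other. The natural strategy is to reduce to a one- or two-dimensional gadget and then take a product. First I would look for a subset $S \subset \mathbb{F}_q^m$ of size $\approx q^{m-1}$ with the property that $x \cdot y \neq 0$ for all $x, y \in S$; if $m$ is small this can be done by hand, as in Lemma \ref{d=2} for $m=2$. Then, writing $d = 2\ell$ or $d = 2\ell+1$, I would try to place $\ell$ (nearly) independent copies of such a gadget into orthogonal coordinate blocks: if $E = S \times S \times \cdots \times S$ sitting in $\mathbb{F}_q^d$ with $S \subset \mathbb{F}_q^2$, then for $x = (x_1,\dots,x_\ell)$ and $y = (y_1,\dots,y_\ell)$ we get $x \cdot y = \sum_i x_i \cdot y_i$, and each summand $x_i \cdot y_i$ is nonzero — but the \emph{sum} of nonzero field elements can be zero, so a naive product fails. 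The fix I would pursue is to force all the blockwise dot products to land in a fixed multiplicative structure (e.g. all equal to a fixed square, or all in a coset on which sums cannot vanish), or to use only one ``active'' block together with $d-2$ frozen coordinates.

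Concretely, the cleanest route: take $E = \{(t, x_2, \dots, x_d) : t \in \mathbb{F}_q^*,\ (x_2,\dots,x_d) \in T\}$ where $T \subset \mathbb{F}_q^{d-1}$ is chosen so that $x \cdot y = x_1 y_1 + (x',y')_{T}$ never vanishes. Even simpler, and I suspect this is the intended construction: pick a non-degenerate quadratic form / a vector $v$ with $v \cdot v \neq 0$, and let $E$ be contained in the ``sphere'' $\{x : x \cdot x = 1\}$ intersected with a suitable subspace, using the identity $x \cdot y = \tfrac{1}{2}(x\cdot x + y \cdot y - (x-y)\cdot(x-y)) = 1 - \tfrac12 (x-y)\cdot(x-y)$, so that $x \cdot y = 0$ forces $(x-y)\cdot(x-y) = 2$; then it suffices to find a large subset of the unit sphere whose difference set avoids the value $2$ (equivalently, avoids a fixed sphere). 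Since spheres in $\mathbb{F}_q^d$ have $\approx q^{d-1}$ points and one can pass to a Sidon-type or subspace-type subset to control the difference set, one expects to retain $\approx q^{d/2+1}$ points after also scaling by the $\mathbb{F}_q^*$ factor that the extra ``$+1$'' in the exponent accounts for. I would make the counting precise by fixing a $2$-dimensional non-isotropic plane $P$, taking the full line-complement gadget from Lemma \ref{d=2} inside $P$ (size $\approx q^2$), and crossing it with a carefully chosen subset of a complementary $(d-2)$-dimensional space on which the form never causes cancellation — aiming for a factor $q^{(d-2)/2}$ there.

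The key steps in order: (1) recall the identity relating orthogonality of $x,y$ to the value of $(x-y)\cdot(x-y)$, reducing the problem to finding a large set whose difference set misses a prescribed value of the quadratic form; (2) produce the base case in dimension $2$ (this is exactly Lemma \ref{d=2}), giving $\approx q^2$ points; (3) iterate or take products across $\lfloor d/2 \rfloor$ coordinate blocks, at each stage ensuring the partial dot products cannot sum to zero — e.g. by restricting each block's dot-product value to a set $A \subset \mathbb{F}_q^*$ that is sum-free in the sense that no $\lfloor d/2\rfloor$-term sum from $A$ hits $0$, which for small $\lfloor d/2\rfloor$ relative to $q$ costs only a constant factor in each block; (4) track the cardinality through the construction to land at $\ge c\, q^{d/2 + 1}$, the extra $q^1$ coming from a free scaling direction $t \in \mathbb{F}_q^*$ that does not interact with the orthogonality condition; (5) verify directly that no two distinct elements of the resulting $E$ are orthogonal.

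The main obstacle I anticipate is exactly the cancellation problem in step (3): nonzero field elements readily sum to zero, so naive products of the dimension-$2$ gadget do not work, and one must either restrict the admissible dot-product values to an explicit sum-free configuration (which is easy when $d$ is small compared to $q$ but needs care to state uniformly) or organize the construction so that only one block is ever ``active'' at a time. A secondary technical point is checking that all the restrictions imposed to kill cancellation only cost constant factors, so that the final bound is genuinely $\gtrsim q^{d/2+1}$ and not something smaller; this is where the bookkeeping has to be done carefully, but I expect no essential difficulty once the right gadget is chosen.
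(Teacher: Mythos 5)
You correctly identify the central obstacle --- blockwise dot products are individually nonzero but their sum can vanish --- and one of the ideas you float (forcing all dot products into a multiplicative structure on which sums cannot hit $0$) is in fact the paper's mechanism. But your key quantitative claim in step (3), that restricting each block's dot-product values to such a sum-avoiding set ``costs only a constant factor,'' is false, and this is where the argument breaks. Your concrete plan takes the full Lemma \ref{d=2} gadget of size $\approx q^2$ in a $2$-plane; its dot-product set is all of $\mathbb{F}_q^*$, so after crossing with a subset $T$ of the complementary $(d-2)$-space, avoiding cancellation forces the dot-product set of $T$ to be $\{0\}$, i.e.\ $T$ must be totally isotropic. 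Totally isotropic subspaces have dimension at most the Witt index of the restricted form, which yields $q^{(d-2)/2}$ points only when that form is even-dimensional and hyperbolic; for odd $d$ one gets at most $q^{(d-3)/2}$ and the construction lands at $q^{(d+1)/2}$, short of the target $q^{d/2+1}$ by $q^{1/2}$. So the ``one active block'' route cannot give the stated bound uniformly in $d$.

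The paper's actual construction accepts a power-of-$q$ (not constant-factor) loss in the two-dimensional block and compensates elsewhere: it sets $q=p^2$, chooses a cyclic subgroup $B\le\mathbb{F}_q^*$ with $\sqrt{-1}\notin B$ and $A=B^2$, and takes $E_1$ to be the $A$-dilates of the circle over the subfield $\mathbb{F}_p$, so that $\vert E_1\vert\approx q^{3/2}$ and all dot products of $E_1$ lie in $A$; similarly $E_2\subset\mathbb{F}_q^{d-2}$ is built from dilates of a sphere over $\mathbb{F}_p$ with $\vert E_2\vert\approx q^{(d-1)/2}$ and dot products in $A\cup\{0\}$. Since $\sigma^2=-\tau^2$ with $\sigma,\tau\in B$ would force $-1$ to be a square in $B$, one gets $0\notin A+(A\cup\{0\})$, and the sizes balance as $q^{3/2}\cdot q^{(d-1)/2}=q^{d/2+1}$. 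The subfield-plus-dilation trick that confines the dot products to a proper multiplicative subgroup while retaining $q^{3/2}$ (resp.\ $q^{(d-1)/2}$) points is the idea missing from your proposal; note that a genuinely $\approx q^2$-sized subset of $\mathbb{F}_q^2$ cannot have its dot-product set inside a proper multiplicative subgroup, so no constant-factor pruning can rescue your version. Your alternative reduction via $x\cdot y=1-\tfrac12(x-y)\cdot(x-y)$ on the unit sphere is likewise left unresolved: you would still need to exhibit a subset of the sphere of the required size whose difference set avoids a prescribed sphere, which you do not do.
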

\begin{proof}
The basic idea is to construct two sets, $E_1 \subset \mathbb{F}_q^2$, and $E_2 \subset \mathbb{F}_q^{d-2}$, such that $\vert E_1 \vert \approx q^\frac{3}{2}$ and $\vert E_2 \vert \approx q^\frac{d-1}{2}$. If you pick $q$ and build these sets carefully, you can guarantee that the sum set of their respective dot product sets does not contain $0$. The following algorithm was inspired by \cite{HIKR07}. \\
\indent Here we will indicate how to construct $E_1$. The construction of $E_2$ is similar. First, let $q = p^2$, where $p$ is a power of a large prime. We also pick these such that $p+1$ is of the form $4n$, where $n$ is odd. This way we can be guaranteed a large, well-behaved multiplicative group of order $q-1=(p-1)(p+1)$, as well as a subfield of order $p$.\\
\indent Let $i$ denote the square root of $-1$, which is in $\Bbb F_q^*$, since $q$ is congruent to 1 mod 4. Now let $B$ be a cyclic subgroup of $\Bbb F_q^*$ of order $\frac{p+1}{4}(p-1) = n(p-1)$. Since $n$ was odd, and $p$ was congruent to 3 mod 4, we know that 4 does not divide the order of $B$. This means that $B$ has no element of order 4, so it is clear that $i \notin B$. Let $\beta$ denote the generator of $B$, as it is a subgroup of a cyclic group, and therefore cyclic. Since $p-1$ is even, we know that we can find another cyclic subgroup, $A$, generated by $\beta^2$. Let $C_p$ be the elements of $\Bbb F_p^*$ that lie on the unit circle, that is,

$$
C_p := \left\lbrace x \in \Bbb F_p^2 : x_1^2 + x_2^2 = 1 \right\rbrace.
$$

\indent From a lemma in \cite{HIKR07}, (or basic number theory) we know that $\vert C_p \vert = p - 1$, since $-1$ is not a square in a field of order congruent to $3$ mod $4$. We can be sure that for all $u,v \in C_p, u \cdot v \in \Bbb F_p$. Now let
$$
E_1' := \left\lbrace \tau u : \tau \in A, u \in C_p \right\rbrace.
$$

So, for all $x,y \in E_1'$, we can be sure that $x \cdot y \in A \cup \lbrace 0 \rbrace$. To see this, let $x = \sigma u$, and $y = \tau v$, where $\sigma, \tau \in A$ and $u,v \in C_p$. Then $x \cdot y = \sigma\tau(u \cdot v) \in A \cup \left\lbrace 0 \right\rbrace$, as any non-zero $u \cdot v \in \Bbb F_p^* \subset A$. Now, the cardinality of $E_1'$ is

$$
\vert E_1' \vert = \vert C_p \vert \vert A \vert = (p-1) \left( \frac{p+1}{4} \frac{p-1}{2}\right) \approx q^\frac{3}{2}.
$$

\indent Now pick $\frac{q}{2}$ mutually non-orthogonal lines in $E_1'$. Call this collection of lines $L$. Let $L^\perp$ indicate the set of lines perpendicular to the lines in $L$. Now we need to prune $E_1'$ so that it has no orthogonal vectors. One of the sets $E_1' \cap L$ or $ E_1' \cap L^\perp$ has more points. Call the set with more points $E_1$. This means that no zero dot products can show up in $E_1$, in a similar manner to the construction in the proof of Lemma \ref{d=2}. Now we have $\vert E_1 \vert \approx q^\frac{3}{2}$, and for any $x,y \in E_1$, we are guaranteed that $x \cdot y \in A$, which does not contain 0.\\
\indent Construct $E_2 \subset \Bbb F_q^{d-2}$ in a similar manner, using spheres instead of circles. However, in the construction of $E_2$, we do not need to prune anything. Now we have $\vert E_2 \vert \approx q^\frac{d-1}{2}$ and all of its dot products lie in $A \cup \lbrace 0 \rbrace$. Set $E = E_1 \times E_2$. Since $E_1$ has its dot product set contained in $A$, and $E_2$ has its dot product set contained in $A \cup \lbrace 0 \rbrace$, we know that any dot product of two elements in $E$ is in the sum set $A + \left( A \cup \lbrace 0 \rbrace \right)$.\\
\indent Now we will show that 0 is not in the dot product set. If two elements did have a zero dot product, that would mean that we had $s,t \in A$, where $s$ comes from the first two dimensions, or $E_1$, and $t$ comes from the other $d-2$ dimensions, or $E_2$, and we also have $s=-t$. (Note, even though $t$ could conceivably be zero, $s$ can not, so we would not have $s=-t$ if $t$ were zero. Therefore $t$ is necessarily an element of $A$.) Recall that $s$ and $t$ are squares of elements in $B$. Call them $\sigma^2$ and $\tau^2$, respectively, for some $\sigma, \tau \in B$. Since $B$ has multiplicative inverses, let $\alpha = \frac{\sigma}{\tau} \in B$. So we would need the following:

$$
\sigma^2 = -\tau^2 \Rightarrow -1 = \frac{\sigma^2}{\tau^2} = \alpha^2.
$$

But we constructed $B$ so that it does not contain the square root of $-1$. Therefore there can be no two elements of $E$ which have a zero dot product.
\end{proof}

\indent The authors believe that the preceeding example can be generalized to obtain results about how large a set can be without containing orthogonal $k$-tuples for $k>2$. The next example is trivial, but is included to indicate one way in which we get some orthogonal vectors, but not as many as would be expected by initial considerations.

\begin{lemma}\label{d-1}
There exists a set $E \subset \mathbb{F}_q^d$ such that $\vert E \vert \approx q^{\frac{k-1}{k}(d-1) + \frac{k-1}{2} + \frac{1}{k}}$, but only $q^{(k-1)(d-1) + 1}$ $k$-tuples of its vectors are orthogonal. 
\end{lemma}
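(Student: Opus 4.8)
The plan is to produce such an $E$ by sitting a threshold‑sized extremal set for Theorem~\ref{main} one dimension down, inside a coordinate hyperplane of $\mathbb{F}_q^d$, and observing that this embedding neither creates nor destroys orthogonality.

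First I would identify $\mathbb{F}_q^{d-1}$, equipped with the form $\sum_{i=1}^{d-1}x_i^2$, with the hyperplane $W=\{x\in\mathbb{F}_q^d:\ x_d=0\}$, equipped with the restriction of $\sum_{i=1}^d x_i^2$; these are literally the same. For $x,y\in W$ the dot product $x\cdot y$ taken in $\mathbb{F}_q^d$ equals the dot product of $x,y$ in $\mathbb{F}_q^{d-1}$, since the discarded coordinate contributes $x_dy_d=0$. Hence, if $E\subset W$ is (the image of) a set $\bar E\subset\mathbb{F}_q^{d-1}$, a $k$-tuple drawn from $E$ is mutually orthogonal in $\mathbb{F}_q^d$ if and only if the corresponding $k$-tuple from $\bar E$ is mutually orthogonal in $\mathbb{F}_q^{d-1}$. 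In particular the quantity $\lambda_k$ for $E\subset\mathbb{F}_q^d$ equals $\lambda_k$ for $\bar E\subset\mathbb{F}_q^{d-1}$: the extra zero coordinate introduces no spurious orthogonal tuples.

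Next I would choose $\bar E\subset\mathbb{F}_q^{d-1}$ to be any set with
$$\vert\bar E\vert\approx q^{\frac{k-1}{k}(d-1)+\frac{k-1}{2}+\frac1k},$$
that is, a set satisfying the hypotheses of Theorem~\ref{main} with $d$ replaced by $d-1$; this is legitimate exactly when $\binom{k}{2}<d-1$, which I assume throughout the construction, paralleling the hypothesis $\binom{k}{2}<d$ of Theorem~\ref{main} itself. Applying Theorem~\ref{main} inside $\mathbb{F}_q^{d-1}$ gives $\lambda_k(\bar E)=(1+o(1))\,\vert\bar E\vert^{k}q^{-\binom{k}{2}}$, and the one-line exponent identity
$$k\!\left(\tfrac{k-1}{k}(d-1)+\tfrac{k-1}{2}+\tfrac1k\right)-\binom{k}{2}=(k-1)(d-1)+1$$
converts this to $\lambda_k(\bar E)\approx q^{(k-1)(d-1)+1}$. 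Pulling everything back through the preceding paragraph yields the set $E\subset\mathbb{F}_q^d$ asserted in the lemma: $\vert E\vert=\vert\bar E\vert\approx q^{\frac{k-1}{k}(d-1)+\frac{k-1}{2}+\frac1k}$ while $\lambda_k(E)\approx q^{(k-1)(d-1)+1}$.

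There is no genuine obstacle in the argument — the lemma is, as advertised, essentially trivial once Theorem~\ref{main} is in hand; the only points requiring care are bookkeeping the drop from dimension $d$ to $d-1$, checking that $\binom{k}{2}<d-1$ so that Theorem~\ref{main} is available in the smaller space, and the interpretation of the conclusion. The last is the real content: a set of this same cardinality spread throughout $\mathbb{F}_q^d$ would have exactly the statistically expected count $q^{(k-1)(d-1)+1}$ of orthogonal $k$-tuples, and our $E$ attains no more than this while lying strictly below the size threshold of Theorem~\ref{main} — so the example calibrates how much room there is between that threshold and the emergence of the expected number of configurations. If one prefers an explicit choice, a random subset of $\mathbb{F}_q^{d-1}$ of the prescribed size (or a suitably sized portion of a non‑degenerate subspace, again invoked through Theorem~\ref{main}) serves equally well for $\bar E$.
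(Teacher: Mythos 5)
Your proposal is correct and is essentially identical to the paper's own argument: both take a set of the threshold size in $\mathbb{F}_q^{d-1}$, apply Theorem \ref{main} there to count its orthogonal $k$-tuples, and embed it as $E_1\times\{0\}$ in $\mathbb{F}_q^d$, noting that the extra zero coordinate preserves the orthogonality count. Your explicit check that $\binom{k}{2}<d-1$ is needed is a point the paper leaves implicit in the phrase ``any suitable $d$ and $k$,'' but otherwise the two proofs coincide.
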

\begin{proof}
A more interesting example is for any suitable $d$ and $k$, consider a set $E_1 \subset \mathbb{F}_q^{d-1}$ that satisfies the size condition for $d-1$ dimensions. That is, $\vert E_1 \vert \approx q^{\frac{k-1}{k}(d-1) + \frac{k-1}{2} + \frac{1}{k}}$. By Theorem \ref{main}, we know that $E_1$ has about $\vert E \vert^kq^{-\left(^k_2\right)} \approx q^{(k-1)(d-1) + 1}$ different $k$-tuples of mutually orthogonal vectors. If we consider $E = E_1 \times \lbrace 0 \rbrace \subset \mathbb{F}_q^d$, we have $\vert E \vert = \vert E_1 \vert \approx q^{\frac{k-1}{k}(d-1) + \frac{k-1}{2} + \frac{1}{k}}$, but we have only about $q^{(k-1)(d-1) + 1}$ different $k$-tuples of mutually orthogonal vectors.
\end{proof}

\indent The set constructed in Lemma \ref{d-1} has fewer than the ``statistically expected" number of mutually orthogonal vectors with respect to the dimension $d$. Considering only the size of the set, this example shows that if we lose a factor of about $q^\frac{k-1}{k}$ points in our set, we lose a factor of about $q^{k-1}$ $k$-tuples of mutually orthogonal vectors. Of course, this is quite artificial, in the sense that $E$ ``really" lives in a smaller dimension, but it does indicate that the result is relatively sharp. This also points to a rather intuitive measure of dimension of a set, if one has a handle on the number of mutually orthogonal $k$-tuples.

\newpage

\end{document}